\newtheorem{theorem}{Theorem}[section]
\newtheorem{prop}[theorem]{Proposition}
\newenvironment{proof}{\prepf\rm}{\endprepf}
\begin{document}
\title{A new look at twin reduction}
\author{Peter J. Cameron\\University of St Andrews}
\date{}
\maketitle
\begin{abstract}
Twin reduction defines an equivalence relation on the vertex set of a graph,
I give a characterisation of this equivalence relation. A consequence is a
structure theorem for the automorphism group of the graph.
\end{abstract}

\section{Twins and twin reduction}

Two vertices $v$ and $w$ of a graph $\Gamma$ are \emph{twins} if they have the
same neighbours except possibly for one another. Sometimes we distinguish
between \emph{open twins} (where $v$ and $w$ are nonadjacent and
$\Gamma(v)=\Gamma(w)$) and \emph{closed twins} (where $v$ and $w$ are
adjacent and $\{v\}\cup\Gamma(v)=\{w\}\cup\Gamma(w)$). (Here $\Gamma(v)$
denotes the set of neighbours of $v$.) Note that all our graphs are finite
and simple.

A vertex $v$ cannot have both an open and a closed twin. For, if $u$ is an
open twin of $v$, and $w$ a closed twin, then $w$ is joined to $v$ and hence
to $u$, but also $w$ is not joined to $u$ and hence not to $v$, a contradiction.
So the classes of vertices which are mutual twins (together with singleton sets
consisting of non-twins) form a partition of the vertex set $V\Gamma$. Moreover,
twins can be exchanged by an automorphism fixing all other vertices, so the
automorphism group contains a direct product of symmetric groups on these
twin classes.

This suggests that, for some purposes, it might be good to keep just one 
vertex from each twin class. However, the situation is really more complicated
than this, since identifying a pair of twins (or deleting one) may create new
twins, as the following example shows.

\begin{figure}[htbp]
\begin{center}
\setlength{\unitlength}{1mm}
\begin{picture}(65,10)
\multiput(10,5)(10,0){2}{\circle*{1}}
\put(10,5){\line(1,0){10}}
\put(10,5){\line(-2,1){10}}
\put(10,5){\line(-2,-1){10}}
\put(0,0){\line(0,1){10}}
\multiput(0,0)(0,10){2}{\circle*{1}}
\multiput(0,0)(0,10){2}{\circle{2}}
\put(25,5){\line(1,0){20}}
\put(35,5){\circle*{1}}
\multiput(25,5)(20,0){2}{\circle*{1}}
\multiput(25,5)(20,0){2}{\circle{2}}
\put(50,5){\line(1,0){10}}
\multiput(50,5)(10,0){2}{\circle*{1}}
\multiput(50,5)(10,0){2}{\circle{2}}
\put(65,5){\circle*{1}}
\put(65,5){\circle{2}}
\end{picture}
\end{center}
\caption{Twin reduction can create new twins}
\end{figure}
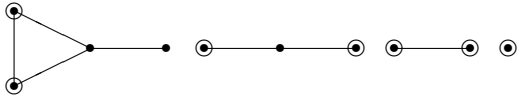

So we introduce the notion of \emph{complete twin reduction}: find a pair
of twins and delete one until no more twin remains. It is not hard to show
that the result of complete twin reduction of a graph is unique up to
isomorphism \cite[Theorem 7.1]{gog}.

The purpose of this note is to give another approach to this process. The
basic idea is that we will build a partition of the vertex set; then
contracting each part to a single vertex gives the result of complete
twin reduction. The main theorem below characterises this partition directly.

We will regard twin reduction instead as an operation on the set of partitions
of $V\Gamma$. With any such partition $\Pi$, we will associate the graph 
$\Gamma/\Pi$ obtained by shrinking each part of $\Pi$ to a single vertex. Now
begin with the partition of $V\Gamma$ into singletons. In the general step,
identify a pair of twins in $\Gamma/\Pi$, and combine the corresponding parts
of $\Pi$ into a single part. Stop when no further twins can be found.

\section{Cographs and sibling partitions}

The class of cographs has been rediscovered a number of times and has many
different characterisations~\cite{jung,seinsche,sumner}. The next result
summarises some of these, including one which is particularly relevant to twin
reduction.

\begin{theorem}
The following properties of a finite graph $\Gamma$ are equivalent.
\begin{enumerate}
\item $\Gamma$ does not contain the path of length~$3$ (with four vertices)
as induced subgraph.
\item If $\Delta$ is any induced subgraph of $\Gamma$, then either $\Delta$ or
its complement is disconnected.
\item $\Gamma$ can be constructed from $1$-vertex graphs by the operations of
complementation and disjoint union.
\item Every induced subgraph of $\Gamma$ contains a pair of twins.
\item Complete twin reduction on $\Gamma$ yields the $1$-vertex graph.
\end{enumerate}
\end{theorem}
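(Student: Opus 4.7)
\emph{Plan.} I would prove the five conditions equivalent via two cycles sharing condition~(a): the classical cycle $(a)\Rightarrow(b)\Rightarrow(c)\Rightarrow(a)$ characterising cographs, together with the cycle $(a)\Rightarrow(d)\Rightarrow(e)\Rightarrow(a)$ that connects them to twin reduction.

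The implication $(a)\Rightarrow(b)$ is Seinsche's theorem and is the main obstacle: a $P_4$-free graph on at least two vertices must be either disconnected or have disconnected complement. A standard approach takes a vertex-minimal counterexample with both $\Gamma$ and $\bar\Gamma$ connected; the absence of an induced $P_4$ together with connectedness forces $\Gamma$ to have diameter at most $2$ (else a shortest path of length~$3$ is an induced $P_4$), and symmetric reasoning in $\bar\Gamma$ produces the forbidden $P_4$. Then $(b)\Rightarrow(c)$ is a routine induction on $|V\Gamma|$: either $\Gamma$ is a disjoint union of smaller graphs satisfying~(b), or $\bar\Gamma$ is, so the inductive hypothesis expresses each summand in the required form. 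For $(c)\Rightarrow(a)$, one verifies by structural induction that the class defined by~(c) is closed under induced subgraphs, and then observes that $P_4$ itself does not lie in this class: the outermost operation in any such construction is either a disjoint union (forcing the graph to be disconnected) or the complement of one (forcing the complement to be disconnected), whereas $P_4=\overline{P_4}$ is connected.

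For the second cycle, $(a)\Rightarrow(d)$ follows from~(b) by induction on $|V\Gamma|$. Applying~(b), we may assume $\Gamma$ is disconnected (the case of $\bar\Gamma$ disconnected is dual, with open and closed twins interchanged). If $\Gamma$ has two singleton components, those two vertices are open twins; otherwise some component has at least two vertices and supplies twins by the inductive hypothesis, and these remain twins in $\Gamma$ since the other components do not affect their neighbourhoods. Heredity of~(a) passes the conclusion to every induced subgraph. Then $(d)\Rightarrow(e)$ is immediate: every intermediate graph $\Gamma/\Pi$ produced by twin reduction is (up to isomorphism) an induced subgraph of $\Gamma$, and so has a pair of twins by~(d); therefore the process can only halt at a single vertex.

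Finally, $(e)\Rightarrow(a)$ depends on the observation that twin reduction preserves the property of containing an induced $P_4$. Suppose $\Gamma$ has an induced $P_4$ on vertices $a,b,c,d$ (in that order), and let $v,w$ be twins in $\Gamma$. A direct check of the neighbourhood conditions shows that $v$ cannot coincide with any of $a,b,c,d$: for each candidate identification $w\in\{a,b,c,d\}$ and each potential twin $v\in\{a,b,c,d\}\setminus\{w\}$, the set $N(v)\setminus\{w\}$ fails to equal $N(w)\setminus\{v\}$. Consequently, when $w$ is identified with $v$, an induced $P_4$ survives in the quotient: the original one if $w\notin\{a,b,c,d\}$, and otherwise the path obtained by substituting $v$ for $w$, for which the required adjacencies and non-adjacencies are readily verified in both the open- and closed-twin cases. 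Hence a graph containing an induced $P_4$ cannot reduce to the one-vertex graph, yielding $(e)\Rightarrow(a)$. The principal difficulty in the whole argument is $(a)\Rightarrow(b)$; the remaining steps are essentially inductive bookkeeping.
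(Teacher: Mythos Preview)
The paper does not prove this theorem: it is stated as a known characterisation of cographs, with references to Jung, Seinsche and Sumner, and no argument is supplied. So there is no ``paper's own proof'' to compare against; your proposal already goes further than the paper does.

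Your outline is sound, and the implications $(b)\Rightarrow(c)\Rightarrow(a)$ and $(a)\Rightarrow(d)\Rightarrow(e)\Rightarrow(a)$ are correctly argued. In particular, the check that no two vertices of an induced $P_4$ can be twins in the ambient graph, and hence that an induced $P_4$ survives each single twin-deletion (replacing the deleted vertex by its twin when necessary), is exactly right. The one soft spot is $(a)\Rightarrow(b)$. You correctly flag it as Seinsche's theorem and as the crux, but what you write is not a proof: observing that a connected $P_4$-free graph has diameter at most~$2$, and that the same holds for its complement, does not by itself produce a contradiction or exhibit a $P_4$; the sentence ``symmetric reasoning in $\bar\Gamma$ produces the forbidden $P_4$'' names the desired conclusion without giving the mechanism. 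A genuine argument needs more---for instance, delete a vertex from a minimal counterexample, apply the inductive hypothesis to split the remainder (or its complement) into pieces, and analyse how the deleted vertex attaches to those pieces to force a $P_4$. Since the paper itself simply cites the literature at this point, your treatment is in the same spirit; just be aware that your paragraph on $(a)\Rightarrow(b)$ is a pointer rather than a proof.
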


Now we define a \emph{sibling partition} of the vertex set of a graph $\Gamma$
by the rules
\begin{enumerate}
\item the induced subgraph on any part of $\Gamma$ is a cograph;
\item between any two parts of $\Gamma$, there are either no edges or all
possible edges.
\end{enumerate}

It follows that, if $v$ and $w$ are two vertices in the same part $A$ of a
sibling partition, then $v$ and $w$ are twins in $\Gamma$ if and only if they
are twins in the induced subgraph on $A$.

\begin{prop}
Any partition obtained in the course of twin reduction is a sibling partition.
\end{prop}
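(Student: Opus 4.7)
The plan is to proceed by induction on the number of merges performed during the twin reduction process.

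For the base case, the starting partition is the partition into singletons. Each part is a single vertex, trivially a cograph, so condition (a) of a sibling partition holds. Between any two singleton parts $\{v\}$ and $\{w\}$ there is either the edge $vw$ or no edge, so condition (b) also holds vacuously (``all edges'' or ``no edges'' between two singletons is just the presence or absence of one edge).

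For the inductive step, suppose $\Pi$ is a sibling partition, and that we form $\Pi'$ by merging two parts $A$ and $B$ corresponding to twin vertices $a,b$ of $\Gamma/\Pi$. I first verify condition~(b) of the sibling partition definition for $\Pi'$. Let $C$ be any other part of $\Pi$. Since $a$ and $b$ are twins in $\Gamma/\Pi$, they have the same adjacency to the vertex of $\Gamma/\Pi$ corresponding to $C$; and this adjacency in $\Gamma/\Pi$ encodes the all-or-nothing adjacency between the parts in $\Gamma$ (by condition (b) applied to $\Pi$). Hence the edges from $A$ to $C$ and from $B$ to $C$ are simultaneously all or none of the possible ones, and therefore the same holds between $A \cup B$ and $C$. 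Adjacencies between pairs of parts not involving $A \cup B$ are inherited unchanged from $\Pi$.

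The main step is condition~(a): the induced subgraph on $A \cup B$ must be a cograph. By induction, the induced subgraphs on $A$ and $B$ are cographs. Since $A,B$ are parts of the sibling partition $\Pi$, the edges of $\Gamma$ between $A$ and $B$ are either all absent or all present, and the two cases correspond precisely to whether $a,b$ are open or closed twins in $\Gamma/\Pi$. In the open case, the induced subgraph on $A \cup B$ is the disjoint union of two cographs, which is a cograph by characterisation~(c) of Theorem~1.1. In the closed case it is the join of two cographs; since cographs are closed under disjoint union and complementation by~(c), they are also closed under join (which is the complement of the disjoint union of complements), so $A \cup B$ again induces a cograph.

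The only subtlety, and the step I would take care with, is the translation between adjacency in $\Gamma/\Pi$ and the all-or-nothing structure in $\Gamma$: it is precisely condition~(b) of the sibling-partition definition applied to $\Pi$ that makes this translation legitimate and that forces the merged part to be either a disjoint union or a join, rather than some mixed structure that could fail to be a cograph.
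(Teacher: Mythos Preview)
Your proof is correct and follows essentially the same inductive argument as the paper: the base case of singletons, and the inductive step showing that merging two twin parts preserves both conditions~(a) and~(b). You are simply more explicit than the paper in separating the open- and closed-twin cases and in spelling out how adjacency in $\Gamma/\Pi$ translates back to the all-or-nothing structure in $\Gamma$.
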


\begin{proof}
The proof is by induction; clearly the partition into singletons is a sibling
partition.

Suppose that at one step of the algorithm we combine parts $A$ and $B$. The
induced subgraph on $A\cup B$ is the disjoint union of two cographs with no or
all possible edges between; the result is a cograph. Also, since $A$ and $B$
are joined to all or no vertices of each further part $C$, and are twins in the
quotient, both are joined to all of $C$, or to none of $C$. So (a) and (b) hold.
\end{proof}

\section{The main theorem}

Given two partitions $\Pi_1$ and $\Pi_2$ of a set $V$, we say that
$\Pi_1$ is \emph{finer} than $\Pi_2$ (or $\Pi_2$ is \emph{coarser} then
$\Pi_1$) if every part of $\Pi_1$ is contained in a part of $\Pi_2$

The \emph{join} of two partitions $\Pi_1$ and $\Pi_2$ of a set $V$ is the
partition $\Pi$ whose classes are connected components of the graph whose
edges are the pairs contained in a part of either $\Pi_1$ or $\Pi_2$. It is
the finest partition coarser than both $\Pi_1$ and $\Pi_2$.

\begin{theorem}
Let $\Gamma$ be a graph.
\begin{itemize}
\item[(i)] The join of any two sibling partitions is a sibling partition.
\item[(ii)] The unique maximal sibling partition is the partition produced by
complete twin reduction on $\Gamma$.
\end{itemize}
\end{theorem}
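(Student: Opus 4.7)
The plan is to establish (i) first and derive (ii) from it. For (i)(b), take a part $P$ of the join $\Pi_1\vee\Pi_2$ and a vertex $v\notin P$. Any two vertices $u,u'\in P$ are linked by a path $u=u_0,u_1,\ldots,u_k=u'$ in which consecutive terms share a part of $\Pi_1$ or $\Pi_2$; since $\Pi_1\vee\Pi_2$ is coarser than both $\Pi_1$ and $\Pi_2$, each such shared part lies inside $P$ and does not contain $v$, so the module property forces $v\sim u_i\iff v\sim u_{i+1}$ at each step, and hence $v\sim u\iff v\sim u'$.

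Restricting $\Pi_1$ and $\Pi_2$ to any part $P$ of the join yields sibling partitions of $\Gamma[P]$ whose join in $\Gamma[P]$ is $\{P\}$, so (i)(a) reduces to showing: \emph{if $\Pi_1,\Pi_2$ are sibling partitions of $\Gamma$ with $\Pi_1\vee\Pi_2=\{V\Gamma\}$, then $\Gamma$ is a cograph}. I would proceed by induction on $|\Pi_1|+|\Pi_2|$. If $|\Pi_1|=1$ or $|\Pi_2|=1$, then $V\Gamma$ is a single sibling part and $\Gamma$ is a cograph. Otherwise the bipartite incidence graph between $\Pi_1$- and $\Pi_2$-parts is connected with at least two vertices on each side and so contains a vertex of degree at least $2$; say a part $N_j$ of $\Pi_2$ meets $\Pi_1$-parts $\{M_i:i\in T\}$ with $|T|\ge 2$. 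The key claim is that $T$ is a module of the quotient $G_1^*:=\Gamma/\Pi_1$; the induced subgraph $G_1^*[T]$ is automatically a cograph, since picking one vertex of each $M_i\cap N_j$ for $i\in T$ embeds $G_1^*[T]$ into the cograph $\Gamma[N_j]$ as an induced subgraph. Granting the module property, $M^*:=\bigcup_{i\in T}M_i$ is a module of $\Gamma$ whose induced subgraph is a substitution of cographs into the cograph $G_1^*[T]$ and so is itself a cograph. Replacing $\{M_i:i\in T\}$ by the single part $M^*$ therefore gives a sibling partition $\Pi_1'$ with $|\Pi_1'|<|\Pi_1|$ and still with $\Pi_1'\vee\Pi_2=\{V\Gamma\}$, to which the inductive hypothesis applies.

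The main obstacle is the module property of $T$ in $G_1^*$. To verify it, fix $M_k\notin T$ and $i,i'\in T$, and pick $u\in M_k$, $v\in M_i\cap N_j$, $v'\in M_{i'}\cap N_j$. Since $M_k\cap N_j=\emptyset$, the vertex $u$ lies in a $\Pi_2$-part different from $N_j$, so by the all-or-nothing property of $\Pi_2$, each of $u\sim v$ and $u\sim v'$ is equivalent to the same statement: the $\Pi_2$-part of $u$ is adjacent to $N_j$ in $\Gamma/\Pi_2$. Hence $u\sim v\iff u\sim v'$, which by the all-or-nothing property of $\Pi_1$ gives $M_k\sim M_i\iff M_k\sim M_{i'}$ in $G_1^*$, as required.

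Given (i), the sibling partitions of $\Gamma$ are closed under joins and contain the singleton partition, so have a unique maximum $\Pi^{\max}$; the Proposition then gives $\Pi^*\leq\Pi^{\max}$ for the partition $\Pi^*$ produced by any completed twin reduction. Suppose for contradiction that $\Pi^*<\Pi^{\max}$ strictly. Then some part $D$ of $\Pi^{\max}$ contains at least two distinct parts of $\Pi^*$. Since $\Gamma[D]$ is a cograph and $\Pi^*|_D$ is a sibling partition of $\Gamma[D]$, the quotient $\Gamma[D]/(\Pi^*|_D)$ is also a cograph on at least two vertices (substitution of cographs into a graph yields a cograph exactly when the base is one), so contains a pair of twins by characterisation~(d) of Theorem~2.1. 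Because $D$ is a module of $\Gamma$, those twins lift to twins of $\Gamma/\Pi^*$, contradicting that $\Gamma/\Pi^*$ has no twins. Hence $\Pi^*=\Pi^{\max}$, proving (ii).
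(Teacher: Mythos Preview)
Your proof is correct. For (i)(b) and for (ii) you follow essentially the same route as the paper (using the chain argument for (b), and for (ii) combining the Proposition with the fact that the quotient by the maximal sibling partition is twin-free), though your (ii) is a little more explicit than the paper's.

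The genuine divergence is in (i)(a). The paper argues directly with the $P_4$-free characterisation: assume some join-part contains an induced path $(a,b,c,d)$, first check that $a,b,c,d$ must lie in four distinct $\Pi_1$-parts and four distinct $\Pi_2$-parts, then take a minimal $\Pi_1$/$\Pi_2$-chain from $a$ to $d$ and observe that replacing $a$ by the second chain vertex gives a $P_4$ with a strictly shorter chain, a contradiction. Your argument is more structural: you reduce to the case $\Pi_1\vee\Pi_2=\{V\Gamma\}$, locate a $\Pi_2$-part $N_j$ meeting several $\Pi_1$-parts, show that the set $T$ of those $\Pi_1$-parts is a module of $\Gamma/\Pi_1$ carrying a cograph, and merge them to shrink $|\Pi_1|$ for an induction. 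Both approaches work; the paper's is shorter and stays closer to the $P_4$ viewpoint, whereas yours is modular-decomposition flavoured and makes explicit the substitution structure (which also reappears in your (ii) when you argue that the quotient of a cograph by a sibling partition is again a cograph).
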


\begin{proof} (i) Let $\Pi_1$ and $\Pi_2$ be sibling partitions, and let
$\Pi=\Pi_1\vee\Pi_2$. We show that $\Pi$ satisfies (a) and (b). We begin
with (b). Let $x$ and $y$ lie in the part $A$ of $\Pi$. By definition, there
is a sequence $x=x_1,x_2,\ldots,x_m=y$ of vertices in $A$ such that each
consecutive pair is contained in a part of either $\Pi_1$ or $\Pi_2$. Now
take a vertex $z\notin A$. Since $\Pi_1$ and $\Pi_2$ satisfy (b), if
$z\sim x=x_1$, then $z\sim x_2$, \dots, and $z\sim x_m=y$. The argument is
similar if $z\not\sim x$. So (b) holds.

\smallskip

Now we prove (a). Suppose that it is false. Then there is a $4$-vertex path,
say $(a,b,c,d)$, contained in $A$. First we prove the following

\subparagraph{Claim} The points $a,b,c,d$ lie in different parts of $\Pi_1$ and
in different parts of $\Pi_2$.

Since $\Pi_1$ and $\Pi_2$ satisfy (a), the four points cannot all lie in the
same part of either.

Suppose that three of the points lie in the same part of $\Pi_1$ but the fourth
does not. Then, since $\Pi_1$ satisfies (b), the fourth point is joined to all
or none of the other three, which is clearly false. The same holds for $\Pi_2$.

Suppose that two of the points lie in the same part $A_1$ of $\Pi_1$ but the
other two do not lie in $A_1$. Then the two points not in $A_1$ are joined to
the same points in $A_1$, which (from the structure of the path) is not
possible. The same holds for $\Pi_2$. 

So the claim is proved.

\smallskip

Now $a$ and $d$ lie in the same part of $\Pi$, so there is a sequence
$a=x_1,x_2,\ldots,x_m=d$ where each consecutive pair lies in a part of
either $\Pi_1$ or $\Pi_2$. We can assume that the path of length $3$ is
chosen to minimise the length of such a sequence (which cannot be zero).
Now $a$ and $x_2$ lie in the same part of, say, $\Pi_1$ but $b,c,d$ do not
lie in this part; so $a$ and $x_2$ have the same adjacencies to $b,c,d$.
In particular, this means that $(x_2,b,c,d)$ is a path of length $3$, with
a shorter sequence joining its endpoints.

This contradiction shows that no path of length $3$ can exist in a part of
$\Pi$, so (b) is proved.

\medskip

Now we prove (ii). Let $\Pi$ be the maximal sibling partition (which is unique
by (i)). We claim that vertices
in different parts of $\Pi$ cannot be twins. For if they were, then replacing
these two parts by their union would give a coarser partition which is also a
sibling partition. (For the disjoint union and the sum of cographs are cographs;
and if $a\in A$ and $b\in B$ are twins, then they all vertices in $A\cup B$
have the same neighbours outside this set.)

So the quotient of $\Gamma$ by $\Pi$ is twin-reduced.

On the other hand, the remark before the theorem shows that we can reach $\Pi$
by twin reduction; but we cannot go any further.
\end{proof}

The two extreme cases in the theorem are:
\begin{itemize}
\item The case where $\Gamma$ has no twins. Then the maximal sibling partition
is just the partition into singletons. Note that a random graph has this
property almost surely.
\item The case where $\Gamma$ is a cograph, Then the maximal sibling partition
is the partition with a single part.
\end{itemize}

\section{Application}

\begin{theorem} 
Let $\Gamma$ be a finite graph, and $G$ its automorphism group. Then $G$ has
a well-defined normal subgroup $N$ such that
\begin{enumerate}
\item $N$ has a normal series in which every factor group is a direct product
of symmetric groups;
\item $G/N$ is a subgroup of the automorphismm group of a twin-free graph.
\end{enumerate}
\end{theorem}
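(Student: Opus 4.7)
The strategy is to take $N$ as the kernel of the canonical action of $G$ on a twin-free quotient of $\Gamma$. By the Main Theorem, the maximal sibling partition $\Pi$ of $V\Gamma$ is uniquely determined by $\Gamma$, so it is preserved by every automorphism; thus $G$ acts on $\Gamma/\Pi$ via a homomorphism $\phi\colon G \to \mathrm{Aut}(\Gamma/\Pi)$, and I would set $N = \ker\phi$. The Main Theorem also asserts that $\Gamma/\Pi$ is twin-free, so the induced embedding $G/N \hookrightarrow \mathrm{Aut}(\Gamma/\Pi)$ immediately gives part~(b) of the present theorem. Any element of $N$ fixes each part $A\in\Pi$ setwise and restricts to an automorphism of the induced subgraph $\Gamma[A]$, which is a cograph by the sibling partition axioms. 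Conversely, because between any two distinct parts of $\Pi$ there are either all possible edges or none, independent choices of automorphism on the various $\Gamma[A]$ reassemble into an automorphism of $\Gamma$ in $N$, so $N \cong \prod_{A\in\Pi}\mathrm{Aut}(\Gamma[A])$.

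The heart of the argument is the following lemma, which I would prove by induction on the number of vertices of $C$: for any cograph $C$, the group $\mathrm{Aut}(C)$ admits a normal series whose successive quotients are direct products of symmetric groups. For the inductive step, passing to the complement if necessary, I may assume $C$ is disconnected; grouping its connected components into isomorphism classes of $m_j$ copies of $E_j$ each, one has $\mathrm{Aut}(C) \cong \prod_j \bigl(\mathrm{Aut}(E_j)\wr S_{m_j}\bigr)$. In each wreath factor the base group $\mathrm{Aut}(E_j)^{m_j}$ is a normal subgroup with quotient $S_{m_j}$, and I would refine the base group by raising the inductively obtained normal series of $\mathrm{Aut}(E_j)$ to the $m_j$-th power.

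To conclude, I would splice the resulting series for the cograph factors across the decomposition $N \cong \prod_{A\in\Pi}\mathrm{Aut}(\Gamma[A])$, obtaining a single normal series for $N$ whose successive quotients are direct products of symmetric groups, and thereby establishing part~(a). The main obstacle I foresee is internal to the cograph lemma: one must verify that each subgroup $H_i^{m_j}$ used to refine the base group is normal not only in $\mathrm{Aut}(E_j)^{m_j}$ but also in the whole wreath product $\mathrm{Aut}(E_j)\wr S_{m_j}$. This reduces to the observation that $S_{m_j}$ acts by permuting coordinates and hence preserves each diagonal subgroup $H_i^{m_j}$ setwise, using the normality of $H_i$ in $\mathrm{Aut}(E_j)$ furnished by the induction hypothesis.
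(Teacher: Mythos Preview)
Your argument is correct and reaches the same subgroup $N$ (the kernel of the action on $\Gamma/\Pi$ for the maximal sibling partition $\Pi$), but you build the normal series for $N$ by a genuinely different mechanism than the paper does. The paper obtains the series \emph{globally}, by iterating twin reduction on all of $\Gamma$: take $\Pi_0$ to be the partition into open twin classes, $\Pi_1$ the closed twin classes of $\Gamma/\Pi_0$, and so on; at each stage the subgroup fixing every class is a direct product of symmetric groups, and since each $\Pi_i$ is canonically defined the corresponding kernels $N_i$ are automatically normal in $G$. You instead work \emph{locally}: having factored $N\cong\prod_{A\in\Pi}\mathrm{Aut}(\Gamma[A])$, you prove an independent lemma that the automorphism group of any cograph has such a series, via the cotree/wreath-product decomposition $\mathrm{Aut}(C)\cong\prod_j\mathrm{Aut}(E_j)\wr S_{m_j}$. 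Your route yields a reusable structural statement about cograph automorphism groups; the paper's route is shorter and sidesteps the wreath-product bookkeeping entirely, since the successive twin-class partitions are visibly $G$-invariant.

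One small remark on your final paragraph. Whether your ``obstacle'' is real depends on which sense of \emph{normal series} is intended. If only $H_i\lhd H_{i+1}$ is required, there is nothing to check: $H_i^{m_j}\lhd H_{i+1}^{m_j}$ is immediate, and you never need normality in the full wreath product. If you want each term normal in all of $\mathrm{Aut}(C)$, then you must strengthen the induction hypothesis to ``each $H_i$ is normal in $\mathrm{Aut}(E_j)$'' (not merely in $H_{i+1}$); with that strengthening your coordinate-permutation argument goes through, and the inductive step does deliver the strengthened conclusion. As written, you invoke ``normality of $H_i$ in $\mathrm{Aut}(E_j)$ furnished by the induction hypothesis'' without having stated the hypothesis in that stronger form, so tighten the phrasing there.
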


\begin{proof}
(a) This is most easily seen by doing the twin reduction in stages: let
$\Pi_0$ be the partition of the vertex set into open twin classes, $\Pi_1$
the partition of the vertex set of $\Gamma/\Pi_0$ into closed twin classes,
and so on. Then the subgroup $N_0$ fixing the parts of $\Pi_0$ is a direct 
product of symmetric groups; the subgroup $N_1/N_0$ of $G/N_0$ fixing the 
parts of $\Pi_1$ is a direct product of symmetric groups; and so on.

\medskip

(b) Clear, since the quotient of $\Gamma$ by the maximal subling partition is
twin-free.
\end{proof}

Again, there are extremal cases: $N=\{1\}$ if $\Gamma$ is twin-free, and
$N=G$ if $\Gamma$ is a  cograph.

\bigskip

Graphs associated with finite groups,, such as the commuting graph, the
generating graph, and the power graph always have twin vertices. This
motivates two questions:
\begin{enumerate}
\item For which groups is one of these graphs a cograph?
\item If such a graph is not a cograph, do we get an interesting graph by
twin reduction, and how is its automorphism group related to the automorphism
group of the group we started with?
\end{enumerate}
For example, in \cite{diff}, the authors show that, if we take the graph
whose edge set if the difference of the power graph and the enhanced graph,
and the group to be the Mathieu group $M_{11}$, twin reduction produces a
semiregular bipartite graph with degrees $3$ and $4$ on $385$ vertices which
has girth~$10$ and automorphism group $M_{11}$.

\paragraph{Acknowledgement} The idea behind this paper came on an evening walk
round the beautiful campus of IIT Madras with Arun Kumar, following the ICDM
in CUSAT, Kochi. I am grateful to ADMA and IIT Madras for support and
hospitality.

\end{document}